\begin{document}

\title[On The Cohomology of \(N_C(-2)\) in Positive Characteristic]{\boldmath On The Cohomology of \(N_C(-2)\) in Positive Characteristic}
\maketitle

\begin{abstract}
Let \(C \subset \pp^3\) be a general Brill--Noether curve.
A classical problem is to determine when \(H^0(N_C(-2)) = 0\),
which controls the quadric section of \(C\).

So far this problem has only been solved in characteristic zero,
in which case \(H^0(N_C(-2)) = 0\) with finitely many exceptions.
In this note, we extend these results to positive characteristic,
uncovering a wealth of new exceptions in characteristic \(2\).
\end{abstract}

\section{Introduction}

Let \(C\) be a general curve of genus \(g\),
equipped with a general embedding \(C \subset \pp^3\) of degree~\(d\).
A classical problem is to determine the cohomology groups of twists of the normal bundle \(N_C\),
which control how \(C\) intersects surfaces.
Since \(\chi(N_C(-2)) = 0\), the most interesting case is the following.

\begin{quest*}
When is \(H^0(N_C(-2)) = 0\)?
\end{quest*}

Since the normal bundle controls the deformation theory of \(C\),
this question is closely linked to how \(C\) intersects a fixed quadric surface \(Q\).
More precisely, an affirmative answer to this question is equivalent
to the assertion that the map \([C] \dashrightarrow [Q \cap C]\) is generically \'etale.

This question was first studied by Ellingsrud and Hirschowitz \cite{eh},
and later by Perrin \cite{perrin},
who used liason to give a partial answer in characteristic zero.
Subsequent work of the author \cite{quadrics-orig}
determined that, in characteristic zero, \(H^0(N_C(-2)) = 0\) apart from six exceptions:
\[(d, g) \in \{(4, 1), (5, 2), (6, 2), (6, 4), (7, 5), (8, 6)\}.\]
These results ultimately found application in the proof of the maximal rank theorem
in characteristic zero; see \cite{mrc}.

On the other hand, in characteristic \(2\),
the normal bundle is the twist of a Frobenius pullback.
This has consequences for closely-related properties like stability \cite{stab-p3}
and interpolation \cite{interpolation},
which must therefore fail for rational space curves of even degree in characteristic \(2\).
The natural guess might thus be that this is the only additional reason
for the vanishing of \(H^0(N_C(-2))\) to fail in positive characteristic,
or in other words, that \(H^0(N_C(-2)) = 0\) except if:
\begin{itemize}
\item \((d, g) \in \{(4, 1), (5, 2), (6, 2), (6, 4), (7, 5), (8, 6)\}\) or
\item \(g = 0\) and \(d\) is even and the characteristic is \(2\).
\end{itemize}
Surprisingly, we show that this expectation is false.
In other words, there are additional cases where
\(H^0(N_C(-2)) \neq 0\) in characteristic \(2\),
corresponding to additional structure besides
merely the fact that \(N_C\) is the twist of a Frobenius pullback!
To state our theorem, we first make the following definition.

\begin{defi} A stable map \(f \colon C \to \pp^3\) is called
a \emph{Brill--Noether curve (BN-curve)} if it corresponds to a point in a
component of \(\bar{M}_g(\pp^r, d)\) which both dominates \(\bar{M}_g\),
and whose generic member is a nondegenerate map from a smooth curve.
Write
\[h(d, g) = h^0(N_{C(d, g)}(-2)) \quad \text{where} \quad C(d, g) \ \text{is a general BN-curve of degree \(d\) and genus \(g\)}.\]
\end{defi}

\begin{thm} \label{thm:main} We have
\begin{equation} \label{f-value}
h(d, g) = \begin{cases}
1 & \text{if the characteristic is \(2\) and \(d + g\) is even} \\
0 & \text{otherwise},
\end{cases}
\end{equation}
except in the following six exceptional cases:
\[\begin{array}{c|c}
(d, g) & h(d, g) \\\hline
(4, 1) & 2 \\
(5, 2) & 2 \\
(6, 2) & 1 \\
(6, 4) & 5 \\
(7, 5) & 3 \\
(8, 6) & 1
\end{array}\]
\end{thm}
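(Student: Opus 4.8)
The plan is to pin down the common value \(h^0(N_C(-2)) = h^1(N_C(-2))\) — forced equal by \(\chi(N_C(-2)) = 0\) — by separating two sources of non-vanishing: a universal self-duality that produces the characteristic-\(2\) obstruction, and a degeneration argument that produces sharp upper bounds in every characteristic. The point is that the ``surprising'' new exceptions should all come from a single mod-\(2\) invariant, so that the six classical exceptions and the generic vanishing can be dispatched by the usual inductive machinery.

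I would begin with the self-duality. The wedge pairing \(N_C \otimes N_C \to \wedge^2 N_C = \det N_C\) together with the adjunction isomorphism \(\det N_C \cong \omega_C(4)\) equips \(N_C(-2)\) with a nondegenerate pairing \(\beta \colon N_C(-2) \otimes N_C(-2) \to \omega_C\). The crucial observation is that the symmetry type of \(\beta\) is characteristic-sensitive: the wedge is alternating, so away from characteristic \(2\) the form \(\beta\) is symplectic and carries no parity information, whereas in characteristic \(2\) ``alternating'' coincides with ``symmetric'' and \(\beta\) makes \(N_C(-2)\) an orthogonal bundle valued in \(\omega_C\). By the higher-rank analogue of Mumford's parity theorem for theta characteristics, \(h^0(N_C(-2)) \bmod 2\) is then a deformation invariant in characteristic \(2\). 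I would compute this invariant on a convenient degeneration (a chain of lines, or a curve assembled by attaching lines to a curve of smaller degree) to show \(h^0(N_C(-2)) \equiv d + g + 1 \pmod 2\); that is, \(h^0\) is odd exactly when \(d + g\) is even. This is the engine behind the new exceptions, and it explains why the naive Frobenius guess fails: the obstruction is the orthogonal structure itself, of which the rational even-degree case is merely the simplest instance.

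For the upper bound I would degenerate \(C(d,g)\) to a nodal BN-curve \(X = C(d', g') \cup Y\), with \(Y\) a line or a low-degree rational curve meeting \(C(d',g')\) transversally in a divisor \(\Delta\), and exploit \(N_X|_{C(d',g')} \cong N_{C(d',g')}(\Delta)\) and \(N_X|_Y \cong N_Y(\Delta)\) together with the Mayer--Vietoris sequence
\[0 \to N_X(-2) \to N_{C(d',g')}(-2)(\Delta) \oplus N_Y(-2)(\Delta) \to N_X(-2)|_\Delta \to 0.\]
Since \(h^0\) is upper semicontinuous, \(h^0(N_{C(d,g)}(-2)) \le h^0(N_X(-2))\), and the right-hand side is controlled inductively by the cohomology of the two pieces, reducing the bound \(h^0 \le 1\) to finitely many base cases checked by hand. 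Combining the two inputs finishes the generic statement: in characteristic \(\neq 2\) the degeneration computation is characteristic-free and reproduces the vanishing of \cite{quadrics-orig}, giving \(h^0 = 0\) outside the exceptions; in characteristic \(2\) the parity of the second paragraph, together with \(h^0 \le 1\), forces \(h^0 = 1\) when \(d+g\) is even and \(h^0 = 0\) otherwise.

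Finally I would treat the six small pairs \((4,1), (5,2), (6,2), (6,4), (7,5), (8,6)\) directly, establishing the tabulated values and verifying their behavior in characteristic \(2\). I expect the main obstacle to be the middle step: identifying the correct mod-\(2\) invariant and proving its deformation-invariance in characteristic \(2\), and then evaluating it on an explicit curve — this is precisely the content that distinguishes the answer from the naive Frobenius prediction. The inductive upper bound, while laborious, is comparatively routine, provided the base cases and the reduction moves are chosen so that the argument runs uniformly over all characteristics.
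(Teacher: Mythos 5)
Your parity mechanism --- the engine of the whole argument --- does not work as described. You propose to use the wedge pairing \(N_C(-2) \otimes N_C(-2) \to \det N_C(-4) \simeq K_C\) and to claim that in characteristic \(2\), since alternating forms are symmetric, \(N_C(-2)\) becomes an ``orthogonal bundle'' to which a higher-rank Mumford parity theorem applies. This conflates symmetric bilinear forms with quadratic forms, which is precisely the distinction that matters in characteristic \(2\): an alternating pairing does not canonically determine a quadratic refinement, and Mumford-type parity invariance is a statement about quadratic (orthogonal) structures, not about alternating self-dualities. Worse, the invariance you want is simply false for the structure you have: \emph{every} rank-\(2\) bundle \(E\) with \(\det E \simeq K_C\) carries this alternating self-duality, so your claim would force \(h^0(E) \bmod 2\) to be constant on the moduli space of such bundles --- contradicting the existence of the generalized theta divisor, across which \(h^0\) jumps from \(0\) to \(1\), in characteristic \(2\) as in any other. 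So no computation on a chain of lines can rescue this step; the invariant you are evaluating is not an invariant. (You also could not have gotten \(d\)-dependence out of it: your proposed structure depends only on \(\det N_C(-2) \simeq K_C\), which carries no memory of \(d\).)

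The paper's actual source of the parity is different and genuinely uses Frobenius. Projection from a general point gives \(0 \to \oo_C(-1) \to N_C(-2) \to K_C(1) \to 0\), whose boundary map is a \emph{symmetric} bilinear form \(\delta\) on \(H^0(K_C(1))\) (multiply sections, cup with the extension class \(e\)); \(h^0(N_C(-2))\) is the dimension of its kernel, and \(\dim H^0(K_C(1)) = d+g-1\) is where the \(d+g\) parity enters. The characteristic-\(2\) input is that the Euler sequence is a Frobenius pullback, so \(e\) is pulled back under Frobenius, while squares of sections lie in the kernel of the Cartier operator, which is Serre-dual to Frobenius pullback; hence \(\delta(w,w)=0\), \(\delta\) has even rank, and \(h(d,g) \equiv d+g+1 \pmod 2\). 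This is the ``additional structure beyond the twist of a Frobenius pullback'' that the introduction advertises, and it cannot be recovered from the determinant of \(N_C\) alone. Separately, your upper-bound paragraph is closer in spirit to the paper (attaching lines and canonical curves, semicontinuity), but you underestimate it: after the routine reductions four cases \((7,4), (7,5), (8,5), (8,6)\) resist the characteristic-zero arguments and need new constructions --- e.g.\ \((7,4)\) is handled not by a cohomology bound at all but by a liaison argument showing \([C] \dashrightarrow [C\cap Q]\) is generically \'etale of degree \(3\).
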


We begin, in Section~\ref{lower2}, by describing the novel additional structure
that constrains the parity of \(h(d, g)\) in characteristic \(2\).
Sections~\ref{sec:six} and~\ref{sec:deg} review the arguments of \cite{quadrics-orig},
and indicate how they can be modified in positive characteristic.
In particular, we show that the proof of Theorem~\ref{thm:main} can be reduced to just four base cases.
These final four cases require more delicate arguments,
which occupy Sections~\ref{sec:63-75-86}--\ref{sec:74}.

As a byproduct of our methods, we also prove the following theorem,
which answers an analogous question for the twist of the normal bundle by \(-1\) in arbitrary characteristic.
To state the theorem, recall that a vector bundle \(\mathcal{E}\) on a curve \(C\)
is said to \textsf{satisfy interpolation} if \(H^1(\mathcal{E}) = 0\), and for a general effective divisor \(D\)
of any degree, either \(H^0(\mathcal{E}) = 0\) or \(H^1(\mathcal{E}) = 0\).

\begin{thm} \label{thm:one} For \(C(d, g)\) a general BN-curve of degree \(d\) and genus \(g\),
the bundle \(N_{C(d, g)}(-1)\) satisfies interpolation,
except if \((d, g) \in \{(5, 2), (6, 4)\}\),
or if \(g = 0\) and \(d\) is even and the characteristic is \(2\).
\end{thm}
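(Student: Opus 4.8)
The plan is to reduce interpolation to a single cohomological vanishing and then leverage Theorem~\ref{thm:main}. Since $N_C$ has rank $2$ and $\chi(N_C(-1)) = 2d$, the Euler characteristic of $N_C(-1)(-D)$ decreases by $2$ for each point of $D$, vanishing exactly when $\deg D = d$. By the standard monotonicity of interpolation (cf.\ \cite{interpolation})---the twist $N_C(-1)(-D)$ cannot acquire $H^1$ as $D$ shrinks, nor $H^0$ as $D$ grows---interpolation for $N_C(-1)$ is equivalent to the single assertion that $H^0(N_C(-1)(-D)) = 0$ for a general effective divisor $D$ of degree $d$. At this twist $\chi = 0$, so the vanishing of $H^0$ automatically forces that of $H^1$, and the auxiliary requirement $H^1(N_C(-1)) = 0$ follows a fortiori.

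Next I would connect this to Theorem~\ref{thm:main}. Choosing $D_0$ in the divisor class of $\mathcal{O}_C(1)$ gives $N_C(-1)(-D_0) \cong N_C(-2)$, so that $h^0(N_C(-1)(-D_0)) = h(d,g)$. Since $\mathrm{Sym}^d C$ is irreducible, $D_0$ is a specialization of a general divisor of degree $d$, and upper semicontinuity of $h^0$ yields
\[
h^0(N_C(-1)(-D)) \le h(d,g) \quad \text{for general } D.
\]
Hence interpolation holds automatically whenever $h(d,g) = 0$, which by Theorem~\ref{thm:main} covers every case except the six exceptional pairs and the family where the characteristic is $2$ and $d + g$ is even.

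For the remaining cases I would rerun the inductive degeneration of Sections~\ref{sec:six}--\ref{sec:74}, now twisting by a general divisor $D$ rather than the special hyperplane section. Degenerating $C$ to a nodal union $X \cup_\Gamma Y$ of lower BN-curves and distributing the points of $D$ between the components, the restricted normal bundle sequences reduce $H^0(N_C(-1)(-D)) = 0$ to the analogous statements on $X$ and $Y$, exactly as for Theorem~\ref{thm:main}. The point is that the extra sections forcing $h(d,g) > 0$ arise from the coplanarity of the hyperplane section---and in characteristic $2$ from its interplay with the Frobenius structure of $N_C$---so they are no longer forced once $D$ is general, and the induction should return $h^0(N_C(-1)(-D)) = 0$ in all but the genuinely exceptional base cases.

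The hard part will be the delicate case analysis at the boundary. Interpolation fails precisely at $(5,2)$ and $(6,4)$ (and, by the Frobenius obstruction, for $g = 0$ with $d$ even in characteristic $2$), yet holds at the four other exceptional pairs $(4,1), (6,2), (7,5), (8,6)$ and throughout the char-$2$, $d+g$-even family despite $h(d,g) \neq 0$ there. The argument must therefore decide, in each such case, whether the distinguished sections of $N_C(-2)$ persist for a general divisor or are merely artifacts of the hyperplane section; I expect this to require the explicit base-case computations of Sections~\ref{sec:63-75-86}--\ref{sec:74}, both to prove vanishing where interpolation holds and to exhibit a persistent section at $(5,2)$ and $(6,4)$.
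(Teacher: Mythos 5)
Your opening reduction is correct and is a nice observation: since \(\chi(N_C(-1)) = 2d\), interpolation for \(N_C(-1)\) is equivalent to \(H^0(N_C(-1)(-D)) = 0\) for \(D\) general of degree \(d\), and specializing \(D\) to a hyperplane section plus semicontinuity on \(\operatorname{Sym}^d C\) gives \(h^0(N_C(-1)(-D)) \le h(d,g)\). This genuinely disposes of every case with \(h(d,g) = 0\). The problem is everything after that. The cases with \(h(d,g) \neq 0\) for which interpolation nevertheless holds are not a finite list to be checked by hand: by Theorem~\ref{thm:main} they include the entire infinite family of characteristic-\(2\) curves with \(d+g\) even and \(g \ge 1\), where \(h(d,g) = 1\) and your bound only gives \(h^0(N_C(-1)(-D)) \le 1\). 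To close this you must exhibit, for each such \((d,g)\), some degree-\(d\) divisor killing the last section, and you must do so by an induction whose steps preserve that property --- ``rerun the degeneration with a general \(D\)'' is a plan, not an argument, and it is precisely at the inductive steps (e.g.\ adding a \(1\)-secant line in characteristic \(2\), where the paper's own Lemma~\ref{lm:addline} shows the \(-2\) twist \emph{gains} a section) that one must check the general-\(D\) statement behaves differently from the \(\oo_C(2)\) twist. You also leave unproved the failure of interpolation at \((5,2)\), \((6,4)\), and \((d \text{ even}, g=0)\) in characteristic \(2\), though these at least are finite in nature (the last following from the Frobenius-pullback structure of \(N_C\)). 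As it stands the proposal proves roughly half the theorem.

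For contrast, the paper does not route Theorem~\ref{thm:one} through Theorem~\ref{thm:main} at all, and in fact disposes of it before the hard work on \(h(d,g)\) begins. Lemma~\ref{attach-can} shows that attaching a canonical curve through \(5\) general points preserves interpolation for \(N_C(-1)\) while raising \((d,g)\) to \((d+6, g+8)\), reducing to genera \(g \le 14\); for those genera the Brill--Noether theorem forces \(d \ge g\), which is exactly the numerical hypothesis of \cite[Proposition 4.12]{firstpaper}, and combining that with the interpolation theorem for \(N_C\) itself \cite[Theorem 1.4]{interpolation} finishes the proof. In other words, the paper outsources the content of Theorem~\ref{thm:one} to prior results on untwisted normal bundles plus a numerical criterion for descending interpolation to a twist, whereas your approach tries to deduce it from the much more delicate \(-2\) twist. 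Your direction of deduction is the harder one, which is why it leaves the residual cases unresolved.
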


\subsection*{Acknowledgements}
The author would like to thank Atanas Atanasov, Izzet Coskun, Isabel Vogt, and David Yang,
for many helpful conversations about normal bundles of curves.
This work was supported by NSF grant DMS-2200641, as well as NSF grant 1440140
while the author was in residence at MSRI/SLMath in Berkeley CA during the spring of 2023.

\section{Lower Bounds in Characteristic \(2\)\label{lower2}}

In this section, we explain the exceptional geometry in characteristic \(2\).
Our main result is Corollary~\ref{cor:parity} below, which establishes that
\(h(d, g) \equiv d + g + 1\) mod \(2\) in characteristic \(2\).

\subsection{The Exact Sequence of Projection}
In any characteristic, we have the Euler sequence for the conormal bundle:
\begin{equation} \label{euler}
0 \to N_C^\vee(1) \to \oo_C^4 \to \mathscr{P}^1(\oo_C(1)) \to 0,
\end{equation}
where \(\mathscr{P}^1(\oo_C(1))\) denotes the bundle of first principal parts of \(\oo_C(1)\).
For a general choice of \(\oo_C\) quotient in the middle term (corresponding to a general point in \(\pp^3\)),
the Euler sequence induces a map \(N_C^\vee(1) \to \oo_C\).
We therefore obtain the exact sequence:
\begin{equation} \label{point-dual}
0 \to \wedge^2 \mathscr{P}^1(\oo_C(1))^\vee \simeq K_C^\vee(-2) \to N_C^\vee(1) \to \oo_C \to 0,
\end{equation}
and a corresponding extension class
\[e \in \ext^1(\oo_C, K_C^\vee(-2)) \simeq H^1(K_C^\vee(-2)).\]

Dualizing and twisting, this gives rise to the normal bundle sequence induced
by projection from the point in \(\pp^3\) corresponding to our choice of \(\oo_C\) quotient:
\[0 \to \oo_C(-1) \to N_C(-2) \to K_C(1) \to 0.\]
Since \(H^0(\oo_C(-1)) = H^1(K_C(1)) = 0\) for degree reasons,
the associated long exact sequence in cohomology
implies that our desired cohomology groups
\(H^0(N_C(-2))\) and \(H^1(N_C(-2))\) are the kernel
and cokernel respectively of the boundary map
\[H^0(K_C(1)) \to H^1(\oo_C(-1)).\]
Using Serre duality between \(H^0(K_C(1))\) and \(H^1(\oo_C(-1))\),
this boundary map may be regarded as a bilinear form
\[\delta \colon H^0(K_C(1)) \times H^0(K_C(1)) \to H^1(K_C) \simeq k,\]
obtained by multiplying sections and taking the cup product
with the extension class \(e\).
Our goal in the following two subsections is to prove Proposition~\ref{prop:skew},
which asserts that \(\delta\) is skew-symmetric in characteristic \(2\) --- and thus has even rank.

\subsection{The Frobenius Morphism and Its Friends}
Here we recall some standard constructions in positive characteristic;
for ease of notation, we will suppose the characteristic is \(2\).
For a more detailed discussion, the reader can consult
\cite[\S2.1--2.6]{cartier-qr}, \cite[\S4]{raynaud}, and/or \cite[\S10]{serre}.

Write \(F \colon C \to C'\) for the relative Frobenius morphism.
If \(L\) is a line bundle on \(C\) (respectively \(L'\) is a line bundle on \(C'\)),
then the norm and squaring maps are linear:
\begin{align*}
\nm_* \colon H^0(C, L) &\to H^0(C', \nm L) \\
\sq \colon H^0(C, L) &\to H^0(C, L^{\otimes 2}) \\
\sq' \colon H^0(C', L') &\to H^0(C', (L')^{\otimes 2}).
\end{align*}
By construction, \(\sq = F^* \circ \nm_*\)
and \(\sq' = \nm_* \circ F^*\).

Recall that, if \(x\) denotes a local coordinate on \(C\), so that \(y = x^2\) gives a local coordinate on \(C'\),
then the \emph{Cartier operator}
\[c \colon F_* K_C \to K_{C'}\]
is, in our case, given by the formula
\[c\big((a_0 + a_1 x + a_2 x^2 + a_3 x^3 + \cdots) \cdot dx\big) = (\sqrt{a_1} + \sqrt{a_3} y + \sqrt{a_5} y^2 + \sqrt{a_7} y^3 + \cdots) \cdot dy.\]
The Cartier operator is independent of choice of local coordinate \(x\), and 
can be defined more generally on any smooth scheme \(X\)
in any positive characteristic \(p\),
as an operator from closed \(i\)-forms on \(X\) to \(i\)-forms on \(X'\).
For details see \cite[\S2.6]{cartier-qr}.

Finally, let \(B\) denote the \emph{sheaf of locally exact differentials},
which is a sheaf of \(\oo_{C'}\)-modules that can be defined in several equivalent manners:
\begin{enumerate}
\item As the sheafification of the presheaf on \(C'\) whose value on an open set \(U\)
is the set of exact differentials on \(F^{-1}(U)\).
\item As those differentials of the form \((a_0 + a_2 x^2 + a_4 x^4 + \cdots) \cdot dx\),
for some (or equivalently for any) local coordinate \(x\) on \(C\).
\item As the cokernel of the adjoint map \(\oo_{C'} \to F_* \oo_C\).
\item As the kernel of the Cartier operator (recalled above) \(c \colon F_* K_C \to K_{C'}\).
\end{enumerate}
It is well-known that the sheaf \(B\) is a square root of the canonical bundle, in both senses:
\[F^* B \simeq K_C \quad \text{and} \quad B^{\otimes 2} \simeq K_{C'}.\]
Indeed, the first of these isomorphisms is induced by pullback of differentials,
while the second is induced by the map \((F_* \oo_C)^{\otimes 2} \to K_{C'}\)
given by \(f \otimes g \mapsto c(f \cdot dg) = -c(df \cdot g) = c(df \cdot g)\).
In particular, \(K_{C'} \simeq \nm K_C\).

\subsection{\boldmath The Bilinear Form \(\delta\) in Characteristic \(2\)} Here we prove the following.

\begin{prop} \label{prop:skew}
Suppose that the characteristic is \(2\).
Then the bilinear form \(\delta\) is skew-symmetric, i.e., \(\delta(w, w) = 0\) for any \(w \in H^0(K_C(1))\).
\end{prop}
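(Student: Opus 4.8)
The plan is to exploit the fact that, by its very construction, \(\delta\) is already symmetric: since \(\delta(w_1, w_2)\) is obtained by \emph{multiplying} \(w_1\) and \(w_2\) and only then cupping with \(e\), we have \(\delta(w_1, w_2) = \delta(w_2, w_1)\) for free. Hence the content of the proposition is the \emph{alternating} property \(\delta(w, w) = 0\). Under Serre duality the relevant pairing is \(H^0(K_C^{\otimes 2}(2)) \times H^1(K_C^\vee(-2)) \to H^1(K_C) \simeq k\), so I would first rewrite \(\delta(w, w) = \operatorname{Tr}_C(w^2 \cup e)\), where \(w^2 = \sq(w) \in H^0(K_C^{\otimes 2}(2))\) and \(\operatorname{Tr}_C \colon H^1(K_C) \to k\) is the trace. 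The whole point is then to show that cupping with the \emph{square} \(w^2\) annihilates \(e\) after taking the trace; this is where characteristic \(2\) enters, via the slogan \(d(w^2) = 0\).

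The structural input is that in characteristic \(2\) everything in sight is a (twist of a) Frobenius pullback. Since \(\sq = F^* \circ \nm_*\), we have \(w^2 = F^*(t)\) with \(t = \nm_*(w) \in H^0(C', \nm(K_C(1)))\); and, because the projection sequence~\eqref{point-dual} itself descends through Frobenius in characteristic \(2\), the extension class is a pullback \(e = F^*(e')\) for a suitable \(e' \in H^1(C', \mathcal{N}')\) with \(F^* \mathcal{N}' \simeq K_C^\vee(-2)\). As \(F^*\) commutes with cup products this gives \(w^2 \cup e = F^*(t \cup e')\). A line-bundle computation then identifies the target: the product \(\mathcal{M} \otimes \mathcal{N}'\) of the two coefficient sheaves pulls back to \(K_C^{\otimes 2}(2) \otimes K_C^\vee(-2) = K_C = F^* B\), and matching this against the canonical square root gives \(\mathcal{M} \otimes \mathcal{N}' \simeq B\), so that \(t \cup e'\) lands in \(H^1(C', B)\), the cohomology of the sheaf of locally exact differentials.

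To finish, I would invoke the compatibility of traces along \(F\), namely \(\operatorname{Tr}_C = \operatorname{Tr}_{C'} \circ c_*\), where \(c_* \colon H^1(C, K_C) = H^1(C', F_* K_C) \to H^1(C', K_{C'})\) is induced by the Cartier operator \(c\) (which is exactly the Serre-duality trace of the finite morphism \(F\)). Under the identification \(H^1(C, F^* B) = H^1(C', F_* K_C)\), the pullback \(F^*(t \cup e')\) is precisely the image of \(t \cup e' \in H^1(C', B)\) under the adjunction unit \(B \to F_* F^* B = F_* K_C\); and since the isomorphism \(F^* B \simeq K_C\) is pullback of differentials, this unit is nothing but the inclusion \(B = \ker(c) \hookrightarrow F_* K_C\). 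Applying \(c_*\) therefore yields \(0\), whence \(\operatorname{Tr}_C(w^2 \cup e) = \operatorname{Tr}_{C'}(0) = 0\). Morally this is just the identity \(c(w^2 \cdot d\eta) = c\big(d(w^2 \cdot \eta)\big) = 0\) when \(e\) is locally represented by an exact differential, using \(d(w^2) = 0\).

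The main obstacle is the assertion that \(e\) itself is a Frobenius pullback, i.e.\ that~\eqref{point-dual} descends to \(C'\) after the correct twist, together with the bookkeeping that produces \(\mathcal{N}'\) with \(F^* \mathcal{N}' \simeq K_C^\vee(-2)\) and the identification \(\mathcal{M} \otimes \mathcal{N}' \simeq B\); this is exactly where the special characteristic-\(2\) geometry of \(N_C\) must be set up carefully. If descending \(e\) globally proves awkward, I would instead run the same computation at the level of \v{C}ech cocycles: represent \(e\) through the principal-parts transition data, observe that each representative is an exact differential times a section, and use \(d(w^2) = 0\) to conclude that multiplication by \(w^2\) produces locally exact differentials, which \(c\) kills. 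Everything else --- the symmetry of \(\delta\), the line-bundle computation, and the trace compatibility --- is routine.
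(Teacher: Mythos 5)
Your proposal is correct and follows essentially the same route as the paper's proof: both rest on (i) the extension class \(e\) being a Frobenius pullback of a class with coefficients in \(B^\vee \otimes \nm \oo_C(-1)\), and (ii) the fact that squares \(\sq(w) = F^*(\nm_* w)\) land (after push--pull) in the kernel \(B\) of the Cartier operator, which is the Serre-dual/trace of \(F^*\). The one input you flag but do not supply --- why \eqref{point-dual} descends through Frobenius --- is exactly the step the paper fills in, via the identification \(\mathscr{P}^1(\oo_C(1)) \simeq F^* F_* \oo_C(1)\) in characteristic \(2\), which exhibits the entire Euler sequence \eqref{euler}, and hence \eqref{point-dual}, as a Frobenius pullback.
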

\begin{proof}
Our first claim is that the extension class \(e\) lies in the image of the pullback map
\[F^* \colon H^1(B^\vee \otimes \nm \oo_C(-1)) \to H^1(K_C^\vee(-2)).\]
Indeed, since \(\mathscr{P}^1(\oo_C(1)) \simeq F^* F_* \oo_C(1)\),
and the evaluation map \(\oo_C^4 \to F^* F_* \oo_C(1)\) is the pullback under Frobenius
of the evaluation map \(\oo_{C'}^4 \to F_* \oo_C(1)\),
the entire Euler sequence \eqref{euler} is the pullback of an exact sequence under Frobenius.
Moreover, the formation of \eqref{point-dual} from \eqref{euler}
is also compatible with Frobenius. More precisely, \eqref{point-dual} is the pullback under Frobenius of
an exact sequence of the form
\[0 \to \wedge^2 (F_* \oo_C(1))^\vee \simeq B^\vee \otimes \nm \oo_C(-1) \to \bullet \to \oo_{C'} \to 0.\]
This implies that \(e\) is a pullback under Frobenius as desired.

Next, we claim that
the image of \(\sq \colon H^0(K_C(1)) \to H^0(K_C^{\otimes 2}(2))\)
lies in the kernel of the Cartier operator. This follows from the following commutative diagram
(because the composition along the bottom row is zero):
\[\begin{tikzcd}[column sep=small]
H^0(K_C(1)) \arrow[r, "\sq"] \arrow[d, "\nm_*"] & H^0(K_C \otimes K_C(2)) \arrow[d, equal, "\text{push-pull}"]\\
H^0(B \otimes B \otimes \nm \oo_C(1)) \arrow[r] & H^0(F_* K_C \otimes B \otimes \nm \oo_C(1)) \arrow[r, "c_*"] &  H^0(K_{C'} \otimes B \otimes \nm \oo_C(1))
\end{tikzcd}\]

Since the Cartier operator is the Serre dual of
Frobenius pullback (see for example \cite[\S10]{serre}),
it follows that \(\delta(w, w) = 0\) as desired.
\end{proof}

\begin{cor} \label{cor:parity} In characteristic \(2\), we have \(h(d, g) \equiv d + g + 1\) mod \(2\).
\end{cor}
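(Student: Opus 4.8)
The plan is to read off the parity of \(h(d, g)\) directly from Proposition~\ref{prop:skew}, with the remaining input being a single Riemann--Roch computation. Recall from the discussion preceding the proposition that, for a general choice of point in \(\pp^3\), the projection sequence identifies \(H^0(N_C(-2))\) with the kernel of the boundary map \(H^0(K_C(1)) \to H^1(\oo_C(-1))\). Under the Serre duality isomorphism \(H^1(\oo_C(-1)) \simeq H^0(K_C(1))^\vee\), this boundary map is exactly the bilinear form \(\delta\), viewed as a linear map \(V \to V^\vee\) with \(V := H^0(K_C(1))\). Consequently \(h(d, g) = \dim \ker(V \to V^\vee)\) is the dimension of the radical of \(\delta\), and in particular \(h(d, g) = \dim V - \operatorname{rank} \delta\).

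By Proposition~\ref{prop:skew}, the form \(\delta\) satisfies \(\delta(w, w) = 0\), i.e.\ it is alternating; and an alternating bilinear form has even rank in every characteristic (one reduces to the non-degenerate quotient and builds a hyperbolic basis there). Hence \(\operatorname{rank} \delta\) is even and \(h(d, g) \equiv \dim V \pmod 2\), so it remains only to compute the parity of \(\dim V = h^0(K_C(1))\). First I would note that \(H^1(K_C(1)) \simeq H^0(\oo_C(-1))^\vee = 0\) for degree reasons, exactly as used in the projection argument, so that \(\dim V = \chi(K_C(1))\). Riemann--Roch then gives \(\dim V = \deg(K_C(1)) + 1 - g = (2g - 2 + d) + 1 - g = d + g - 1\), whence \(h(d, g) \equiv d + g - 1 \equiv d + g + 1 \pmod 2\), as claimed.

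The corollary is essentially a formal consequence of the proposition, so there is no serious obstacle; the only points requiring care are bookkeeping. One should check that the kernel of \(V \to V^\vee\) computed from the long exact sequence really is the full radical of \(\delta\): because \(\delta\) is alternating (hence its left and right radicals coincide), this is automatic, so the even-rank fact applies verbatim. One should likewise confirm that the parity statement is insensitive to the choice of general point defining \(\delta\) --- which is clear, since \(h^0(N_C(-2))\) is an intrinsic invariant of the bundle and agrees with \(\dim\ker\delta\) for every such choice. Beyond this, the argument is purely the vanishing of \(H^1(K_C(1))\) together with the alternating property already established.
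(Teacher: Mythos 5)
Your proof is correct and follows essentially the same route as the paper's: identify \(h(d,g)\) with the corank of the alternating form \(\delta\) on \(H^0(K_C(1))\), invoke the even-rank property of alternating forms, and compute \(h^0(K_C(1)) = d + g - 1\) by Riemann--Roch. The paper states this in one sentence; you have merely supplied the bookkeeping.
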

\begin{proof}
This follows from Proposition~\ref{prop:skew},
since the rank of a skew-symmetric form is even,
and \(h^0(K_C(1)) = d + g - 1\).
\end{proof}

\section{Review: The Six Exceptional Cases \label{sec:six}}

The geometric descriptions given in \cite{quadrics-orig}
quickly yield that \(h(d, g)\)
is at least the value claimed by Theorem~\ref{thm:main}
in the six exceptional cases
(and in fact is equal to the claimed values when
\((d, g) \in \{(4, 1), (5, 2), (6, 4)\}\)).
For completeness, we briefly recall these descriptions here.

\subsection{\boldmath \((d, g) = (4, 1)\)}
Such curves are the complete intersection of two quadrics,
so we have \(N_C \simeq \oo_C(2)^2\). In particular \(h^0(N_C(-2)) = 2\).

\subsection{\boldmath \((d, g) = (5, 2)\)}
Such curves lie on a quadric \(Q\). By a Chern class computation, we have \(N_{C/Q} \simeq K_C(2)\), and so
\(h^0(N_C(-2)) \geq h^0(N_{C/Q}(-2)) = h^0(K_C) = 2\).
In fact, with a bit more work, one can show \(h^0(N_C(-2)) = 2\) (see \cite[Lemma 3.1]{stab-p3}).

\subsection{\boldmath \((d, g) = (6, 2)\)}
Such curves are the projection of a curve \(\tilde{C} \subset \pp^4\) from a point not lying on \(\tilde{C}\),
and \(\tilde{C} = Q \cap S\) is the intersection of a quadric hypersurface \(Q\) and a cubic scroll \(S\) in \(\pp^4\).
In particular \(h^0(N_C(-2)) \geq h^0(N_{\tilde{C}/S}(-2)) = h^0(\oo_{\tilde{C}}) = 1\).

\subsection{\boldmath \((d, g) = (6, 4)\)}
Such curves are the complete intersection of a quadric and cubic surface,
so we have \(N_C \simeq \oo_C(2) \oplus \oo_C(3)\).
In particular \(h^0(N_C(-2)) = 5\).

\subsection{\boldmath \((d, g) = (7, 5)\)}
Such curves are the projection of a canonical curve \(\tilde{C} \subset \pp^4\)
from a point \(p \in \tilde{C}\), and \(\tilde{C}\) is the complete intersection of three quadrics.
In particular, we have an exact sequence
\[0 \to \oo_{\tilde{C}}(1)(2p) \simeq \oo_C(1)(3p) \to \oo_{\tilde{C}}(2)^{\oplus 3} \simeq \oo_C(2)(2p)^{\oplus 3} \to N_C(p) \to 0.\]
Therefore \(h^0(N_C(-2)) \geq 3 \cdot h^0(\oo_C(p)) - h^0(\oo_C(-1)(2p)) = 3 \cdot 1 - 0 = 3\).

\subsection{\boldmath \((d, g) = (8, 6)\)}
Such curves lie on a cubic surface \(S\). By a Chern class computation, we have \(N_{C/S} \simeq K_C(1)\), and so
\(h^0(N_C(-2)) \geq h^0(N_{C/S}(-2)) = h^0(K_C(-1)) = 1\).

\section{Review: Degeneration Arguments \label{sec:deg}}

The basic strategy of \cite{quadrics-orig} to prove upper bounds is degeneration to reducible curves.
In this section, we review these arguments.
We indicate how trivial modifications can be made to remove
the characteristic zero assumption in all but four cases,
which will be take up in the following sections.
To show the reducible curves constructed in \cite{quadrics-orig} are BN-curves,
we will use \cite[Theorems 1.6 and 1.7]{rbn}.
Although \cite{rbn} assumes characteristic zero for the proofs of its main theorem,
this assumption does not enter into the proofs of these two theorems.

\begin{lm}[{Variant of \cite[Lemma 2.6]{quadrics-orig}}] \label{kh1}
Let \(f \colon C \cup_\Gamma D \to \pp^r\) be an unramified map from a reducible curve,
with \(C\) and \(D\) smooth,
and let \(E\) and \(F\) be
divisors supported on \(C \smallsetminus \Gamma\) and \(D \smallsetminus \Gamma\)
respectively. Write
\[\alpha \colon H^0(N_{f|_D}(-F)) \to \bigoplus_{p \in \Gamma} \left(\frac{T_p (\pp^r)}{f_* (T_p (C \cup_\Gamma D))}\right).\]
Then
\[h^0(N_f(-E-F)) \leq \dim \ker \alpha + \codim \big(H^0(N_{f|_D} (-F)) \subseteq H^0(N_f|_D (-F))\big) + h^0(N_{f|_C} (-E)).\]
\end{lm}
\begin{proof}
This follows as in \cite[Lemma 2.6]{quadrics-orig}, which states that
\(h^0(N_f(-E-F)) = 0\) provided that \(\alpha\) is injective,
\(H^0(N_{f|_D} (-F)) = H^0(N_f|_D (-F))\), and \(h^0(N_{f|_C} (-E)) = 0\).
\end{proof}

\begin{lm}[{Variant of \cite[Lemma 5.3]{quadrics-orig}}] \label{attach-can}
Let \(\Gamma \subset \pp^3\) be a set of \(5\) general points,
\(C\) a general BN-curve passing through \(\Gamma\),
and \(D\) a general canonical curve passing through \(\Gamma\).
Then \(h^0(N_{C \cup D}(-2)) \leq h^0(N_C(-2))\) and
interpolation for \(N_C(-1)\) implies interpolation for \(N_{C \cup D}(-1)\).
\end{lm}
\begin{proof}
Let \(E\) and \(F\) be divisors supported on \(C \smallsetminus \Gamma\) and \(D \smallsetminus \Gamma\)
with \(\oo_D(F) \simeq \oo_D(2)\).
The same argument as in \cite[Lemma 5.3]{quadrics-orig},
using Lemma~\ref{kh1} in place of \cite[Lemma 2.6]{quadrics-orig},
implies \(h^0(N_{C \cup D}(-E-F)) \leq h^0(N_C(-E))\).
The desired results follow.
\end{proof}

If \(C\) and \(D\) are as in Lemma~\ref{attach-can},
then by \cite[Theorem 1.6]{rbn}, the resulting curve \(C \cup D\)
is a BN-curve of degree \(d + 6\) and genus \(g + 8\).
We conclude that \eqref{f-value} for all general BN-curves of genus \(g\) implies
\eqref{f-value} for all general BN-curves of genus \(g + 8\), respectively interpolation for
\(N_C(-1)\) for all general BN-curves of genus \(g\) implies interpolation for \(N_C(-1)\)
for all general BN-curves of genus \(g + 8\). It therefore suffices
to prove Theorems~\ref{thm:main} and~\ref{thm:one} for
\[g \in \{0, 1, 2, 3, 4, 5, 6, 7, 9, 10, 12, 13, 14\},\]
plus Theorem~\ref{thm:one} for \(g = 8\).

For any such genus \(g\), the Brill--Noether theorem implies \(d \geq g\),
which is equivalent to inequality (b) in \cite[Proposition 4.12]{firstpaper}
for \(E = N_C(-1)\).
Combining \cite[Proposition 4.12]{firstpaper} with \cite[Theorem 1.4]{interpolation} therefore
completes the proof of Theorem~\ref{thm:one}.
For the remainder of the paper we thus consider only Theorem~\ref{thm:main}.

\begin{lm}[{Variant of \cite[Lemma 5.2]{quadrics-orig}}] \label{lm:addline}
Let \(C \subset \pp^3\) be a general BN-curve,
and \(L\) be a general \(1\)-secant line.
Then
\[h^0(N_{C \cup L}(-2)) \leq \begin{cases}
h^0(N_C(-2)) - 1 & \text{if \(h^0(N_C(-2)) > 0\);} \\
1 & \text{if \(h^0(N_C(-2)) = 0\) and the characteristic is \(2\);} \\
0 & \text{if \(h^0(N_C(-2)) = 0\) and the characteristic is zero or odd.}
\end{cases}\]
\end{lm}
\begin{proof}
The first two cases follow from Lemma~\ref{kh1}, with \((C, D) = (L, C)\).
The final case follows from \cite[Lemma 5.2]{quadrics-orig}
(whose proof works when the characteristic is zero or odd).
\end{proof}

We conclude that \eqref{f-value} for BN-curves of degree \(d\) and genus \(g\) implies
\eqref{f-value} for BN-curves of degree \(d + 1\) and genus \(g\),
and moreover that the truth of Theorem~\ref{thm:main} for \((d, g) = (5, 2)\) (respectively for \((d, g) = (8, 6)\))
implies the truth of Theorem~\ref{thm:main} for \(g = 2\) (respectively for \(g = 6\)).
This reduces the proof of Theorem~\ref{thm:main} to a finite number of cases:
\begin{multline*}
(d, g) \in \{(3, 0), (4, 1), (5, 1), (5, 2), (6, 3), (6, 4), (7, 4), (7, 5), (8, 5), \\
(8, 6), (9, 7), (10, 9), (11, 10), (12, 12), (13, 13), (14, 14)\}.
\end{multline*}
All but four of these cases follow either from trivial modifications of arguments in \cite{quadrics-orig},
or directly from the above results.

\subsection{\boldmath \((d, g) = (3, 0)\)} In this case, \(N_C\) is balanced by \cite[Theorem 1]{stab-p3},
so \(h(3, 0) = 0\) as desired.

\subsection{\boldmath \((d, g) \in \{(4, 1), (5, 2), (6, 4)\}\)}
These cases were already settled in Section~\ref{sec:six}.  

\subsection{\boldmath \((d, g) = (5, 1)\)}
When the characteristic is zero or odd,
the proof in \cite[Section 10]{quadrics-orig} applies to show \(h(5, 1) = 0\).
(The only reason this argument fails in characteristic \(2\) is because the curve \(f(L)\) appearing in the proof is a strange curve;
of course, this cannot happen when the characteristic is zero or odd.)

In characteristic \(2\), we may apply Lemma~\ref{lm:addline} to show \(h(5, 1) \leq 1\);
since \(h(5, 1)\) is odd by Corollary~\ref{cor:parity},
this shows \(h(5, 1) = 1\) as desired.

\subsection{\boldmath \((d, g) \in \{(10, 9), (11, 10), (12, 12), (13, 13), (14, 14)\}\)}
These cases follow as in \cite[Lemma 7.1]{quadrics-orig},
again using Lemma~\ref{kh1} in place of \cite[Lemma 2.6]{quadrics-orig}.

\subsection{\boldmath \((d, g) \in \{(6, 3), (9, 6), (9, 7)\}\)}
We construct reducible curves with \(H^0(N_C(-2)) = 0\) as in
\cite[Corollary 8.2]{quadrics-orig}.

To show the resulting curves are BN-curves,
we use \cite[Theorem 1.6]{rbn} for \((d, g) \in \{(6, 3), (9, 6)\}\),
respectively \cite[Theorem 1.7]{rbn} for \((d, g) = (9, 7)\).
(The original argument in \cite{quadrics-orig} shows the resulting curves are BN-curves
by using results of \cite{keem} that require a characteristic zero hypothesis.)

\subsection{\boldmath The Remaining Cases: \((d, g) \in \{(7, 4), (7, 5), (8, 5), (8, 6)\}\)}
In these cases, the arguments of \cite{quadrics-orig}
break down more seriously in positive characteristic,
thereby requiring new ideas.
The remainder of the paper will be devoted to these four cases.

\section{The Cases \((d, g) \in \{(7, 5), (8, 6)\}\) \label{sec:63-75-86}}

In these cases, we apply Lemma~\ref{kh1}. We take \(C\) to be a \(2\)-secant line,
respectively the union of two disjoint \(2\)-secant lines,
to a curve \(D\) of degree \(6\) and genus \(4\).

The curve \(D\) lies on a unique (smooth) quadric \(Q\).
The surjectivity of \(\alpha\) can be shown by restricting to the subspace
\(H^0(N_{D/Q}(-2))\), so \(\dim \ker \alpha\) is \(3\) and \(1\) respectively.
The equality \(H^0(N_D(-2)) = H^0(N_{C \cup D}|_D(-2))\) follows from
\(H^0(\oo_D(C \cap D)) = H^0(\oo_D)\), thanks to the following commutative diagram:
\[\begin{tikzcd}
0 \arrow[r] & N_{D/Q}(-2) \arrow[r] \arrow[d, equal] & N_D(-2) \arrow[r] \arrow[d] & N_Q|_D(-2) \simeq \oo_D \arrow[r] \arrow[d] & 0 \\
0 \arrow[r] & N_{D/Q}(-2) \arrow[r] & N_{C \cup D}|_D(-2) \arrow[r] & \oo_D(C \cap D) \arrow[r] & 0.
\end{tikzcd}\]
Since \(H^0(N_C(-2)) = 0\),
the upper bound from Lemma~\ref{kh1} is \(\dim \ker \alpha\), which
matches the lower bounds established in Section~\ref{sec:six}.

\section{The Case \((d, g) = (8, 5)\) \label{sec:85}}

We begin by taking a hyperelliptic curve \(D\) of genus \(3\),
and points \(p_1\) and \(p_2\) not conjugate under the hyperelliptic involution.
Write \(f \colon D \to \pp^3\) for the map obtained from the complete linear system \(|2H + p_1 + p_2|\).
By construction, \(f\) maps \(p_1\) and \(p_2\) to a common point \(q \in \pp^3\),
but is injective on every other divisor of degree \(2\)
(so in particular unramified).
Projection from \(q\) realizes the complete linear system \(|2H|\),
which maps \(2\)-to-\(1\) onto a plane conic;
therefore, the image of \(f\) lies on a singular quadric \(Q\) with vertex at \(q\).
Write \(N_{D \to Q}\) for the normal sheaf of the map \(D \to Q\).
Using the exact sequence
\[0 \to N_{D \to Q}(-2) \simeq K_D(p_1 + p_2) \to N_f(-2) \to N_Q|_D(-2) \simeq \oo_D(-p_1-p_2) \to 0,\]
we see that \(h^0(N_f(-2)) = 4\), with all sections arising from the subbundle \(N_{D \to Q}(-2)\).
Since \(6 > 2 \cdot 3 - 2\), the map \(f\) is automatically a BN-curve.

We attach general \(2\)-secant lines \(L_1\) and \(L_2\) to \(D\),
with \(L_i\) meeting \(D\) at points \(\{q_{i1}, q_{i2}\}\).
By \cite[Theorem 1.6]{rbn}, the resulting map \(\hat{f} \colon D \cup L_1 \cup L_2 \to \pp^3\)
is a BN-curve.
We then apply Lemma~\ref{kh1}.

The injectivity of \(\alpha\) follows from the generality of the \(q_{ij}\)
and the fact that \(h^0(N_f(-2)) = 4\) with all sections arising from the subbundle \(N_{D \to Q}(-2)\),
which is transverse to the \(L_i\).
The equality \(H^0(N_f(-2)) = H^0(N_{\hat{f}}|_D(-2))\) follows from
\[H^0(\oo_D(q_{11} + q_{12} + q_{21} + q_{22} -p_1-p_2)) = 0 = H^0(\oo_D(-p_1-p_2)),\]
thanks to the following commutative diagram:
\[\begin{tikzcd}
0 \arrow[r] & N_{D \to Q}(-2) \arrow[r] \arrow[d, equal] & N_f(-2) \arrow[r] \arrow[d] & \oo_D(-p_1-p_2) \arrow[r] \arrow[d] & 0 \\
0 \arrow[r] & N_{D \to Q}(-2) \arrow[r] & N_{\hat{f}}|_D(-2) \arrow[r] & \oo_D(q_{11} + q_{12} + q_{21} + q_{22} -p_1-p_2) \arrow[r] & 0.
\end{tikzcd}\]
Since \(H^0(N_{L_i}(-2)) = 0\), this completes the proof.

\section{The Case \((d, g) = (7, 4)\) \label{sec:74}}

In this section, we establish Theorem~\ref{thm:main} for \((d, g) = (7, 4)\).
Rather than computing the cohomology group \(H^0(N_C(-2))\),
we will reason geometrically to show that \([C] \dashrightarrow [C \cap Q]\) is generically \'etale.
In fact, our argument will show a bit more: This map is generically \'etale of degree \(3\),
and has a Galois group isomorphic to \(S_3\).

Let \(\Gamma \subset Q \simeq \pp^1 \times \pp^1\) be a general set of \(14\) points.
Then \(\Gamma\) lies on a pencil of \((3, 3)\)-curves,
and the residual to \(\Gamma\) in the base locus is a general set of \(4\) points \(\Gamma' \subset Q\).
After a generically \'etale basechange (of degree \(3\) with Galois group \(S_3\)),
we may partition \(\Gamma' = \Gamma_1 \cup \Gamma_2\) into two sets of \(2\) points each.
(This partition is unordered, i.e., we do not label which set is \(\Gamma_1\) and which set is \(\Gamma_2\).)
Given such a partition,
our goal is to construct a BN-curve \(C\) of degree \(7\) and genus \(4\) whose intersection with \(Q\) is \(\Gamma\).

Write \(L_i \subset \pp^3\) for the line joining the two points of \(\Gamma_i\).
By dimension count, our pencil of \((3, 3)\)-curves on \(Q\)
lifts uniquely to a pencil of cubic surfaces in \(\pp^3\)
containing \(L_1 \cup L_2\).
We construct \(C\) as the residual to \(L_1 \cup L_2\) in the base locus of this pencil of cubic surfaces.
By the liason formula, \(C\) has degree \(7\) and genus \(4\),
and is automatically a BN-curve because \(d > 2g - 2\).
Finally, by construction, \(C \cap Q = \Gamma\).

\bibliographystyle{amsplain.bst}
\bibliography{iqbib.bib}
\end{document}